\DeclareRobustCommand{\em}{%
	\@nomath\em \if b\expandafter\@car\f@series\@nil
	\normalfont \else \slshape \fi}
\numberwithin{equation}{section}
\numberwithin{equation}{section}
\newtheoremstyle{style1}
{13pt}
{13pt}
{}
{}
{\normalfont\bfseries}
{.}
{.5em}
{}
\theoremstyle{style1}
\newtheorem{definition}{Definition}[section]
\newtheorem{example}[definition]{Example}
\newtheorem{remark}[definition]{Remark}
\newtheorem*{repd@theorem}{\repd@title}
\newcommand{\newrepdtheorem}[2]{%
	\newenvironment{repd#1}[1]{%
		\def\repd@title{#2 \ref{##1}}%
		\begin{repd@theorem}}%
		{\end{repd@theorem}}}
\newcommand{\catf}[1]{{\mathsf{#1}}}
\newtheoremstyle{style2}
{13pt}
{13pt}
{\slshape}
{}
{\normalfont\bfseries}
{.}
{.5em}
{}
\theoremstyle{style2}
\newtheorem*{rep@theorem}{\rep@title}
\newcommand{\newreptheorem}[2]{%
	\newenvironment{rep#1}[1]{%
		\def\rep@title{#2 \ref{##1}}%
		\begin{rep@theorem}}%
		{\end{rep@theorem}}}
\newtheorem{lemma}[definition]{Lemma}
\newtheorem{theorem}[definition]{Theorem}
\newtheorem{corollary}[definition]{Corollary}
\newcommand{\Z}{\mathbb{Z}}
\newcommand{\R}{\mathbb{R}}
\newcommand{\Map}{\catf{Map}}
\newcommand{\Diff}{\catf{Diff}}
\newcommand{\cat}[1]{\mathcal{#1}}
\newcommand{\ra}[1]{\xrightarrow{\ #1 \ }}
\let\DH\undefined
\newcommand{\DH}{\catf{DH}}
\newcommand{\spaceplease}{\needspace{5\baselineskip}}
\newcommand{\Ch}{\catf{Ch}_k}
\newcommand{\hocolim}{\operatorname{hocolim}}
\newcommand{\Hom}{\operatorname{Hom}}
\newcommand{\id}{\operatorname{id}}
\newcommand{\Cat}{\catf{Cat}}
\newcommand{\DS}{\text{/\hspace{-0.1cm}/}}
\let\to\undefined
\newcommand{\to}{\longrightarrow}
\let\mapsto\undefined
\newcommand{\mapsto}{\longmapsto}
\newcommand{\Lexf}{\catf{Lex}^\mathsf{f}}
\newcommand{\opp}{\text{opp}}
\newcommand{\horb}{h \, }
\newcommand{\Surfc}{\catf{Surf}^{\catf{c}}}
\newcommand{\lint}{\int_{\mathbb{L}}}
\newcommand{\Proj}{\catf{Proj}\,}
\DeclareMathSymbol{\Phiit}{\mathalpha}{letters}{"08} 
\DeclareMathSymbol{\Psiit}{\mathalpha}{letters}{"09}
\DeclareMathSymbol{\Sigmait}{\mathalpha}{letters}{"06}
\DeclareMathSymbol{\Xiit}{\mathalpha}{letters}{"04}
\DeclareMathSymbol{\Piit}{\mathalpha}{letters}{"05}\let\Pi\undefined\newcommand{\Pi}{\Piit}
\DeclareMathSymbol{\Gammait}{\mathalpha}{letters}{"00}
\DeclareMathSymbol{\Omegait}{\mathalpha}{letters}{"0A}\let\Omega\undefined\newcommand{\Omega}{\Omegait}
\DeclareMathSymbol{\Upsilonit}{\mathalpha}{letters}{"07}
\DeclareMathSymbol{\Thetait}{\mathalpha}{letters}{"02}
\DeclareMathSymbol{\Lambdait}{\mathalpha}{letters}{"03}\let\Lambda\undefined\newcommand{\Lambda}{\Lambdait}
\let\Phi\undefined\newcommand{\Phi}{\Phiit}
\let\Sigma\undefined\newcommand{\Sigma}{\Sigmait}
\let\Psi\undefined\newcommand{\Psi}{\Psiit}
\let\Gamma\undefined\newcommand{\Gamma}{\Gammait}
\newenvironment{pnum}{\begin{enumerate}[label=(\roman*)]}{\end{enumerate}}
\renewcommand\section{\@startsection {section}{1}{\z@}%
	{-3.5ex \@plus -1ex \@minus -.2ex}%
	{2.3ex \@plus.2ex}%
	{\normalfont\scshape\centering}}
\begin{document}

	\vspace*{-1.5cm}	\begin{flushright}\small		{\sffamily CPH-GEOTOP-DNRF151} \\	\textsf{January 2022}	\end{flushright}	\vspace{5mm}	\begin{center}	\textbf{\Large{The Diffeomorphism Group of the Solid Closed Torus	 \\[0.5ex]
				and Hochschild Homology}}\\	\vspace{1cm}	{\large Lukas Müller $^{a}$} \ \ and \ \ {\large Lukas Woike $^{b}$}\\ 	\vspace{5mm}{\slshape $^a$ Max-Planck-Institut f\"ur Mathematik\\ Vivatsgasse 7 \\  D-53111 Bonn}\\ \emph{lmueller4@mpim-bonn.mpg.de }	\\[7pt]	{\slshape $^b$ Institut for Matematiske Fag\\ K\o benhavns Universitet\\	Universitetsparken 5 \\  DK-2100 K\o benhavn \O }\\ \ \emph{ljw@math.ku.dk }\end{center}	\vspace{0.3cm}	
	\begin{abstract}\noindent 
		We prove that for a self-injective ribbon Grothendieck-Verdier category $\cat{C}$ in the sense of Boyarchenko-Drinfeld the cyclic action on the Hochschild complex of $\cat{C}$ extends to an action of the diffeomorphism group of the solid closed torus $\mathbb{S}^1 \times \mathbb{D}^2$.	
	\end{abstract}

\tableofcontents

\section{Introduction and summary}
The Hochschild complex of any associative algebra $A$ in a suitable (higher) symmetric monoidal category can be defined as the homotopy colimit of the simplicial object
	\begin{equation}\label{eqnHochschildobject}
	\begin{tikzcd}
		\dots \ar[r, shift left=6]  \ar[r, shift left=2]
		\ar[r, shift right=6]  \ar[r, shift right=2]
		& 	\displaystyle A^{\otimes 3}
		\ar[l, shift left=4]  \ar[l]
		\ar[l, shift right=4]  
		\ar[r, shift left=4] \ar[r, shift right=4] \ar[r] & \displaystyle A^{\otimes 2} \ar[r, shift left=2] \ar[r, shift right=2]
		\ar[l, shift left=2] \ar[l, shift right=2]
		& \displaystyle A\ ,  \ar[l] 
	\end{tikzcd}
\end{equation}
whose face maps use the product of $A$ while the degeneracy maps insert units.
For an algebra in vector spaces, the homology groups of the chain complex 
corresponding to~\eqref{eqnHochschildobject}
are the familiar Hochschild homology groups.
The cyclic permutation of the tensor copies of $A$ in each degree induces a cyclic symmetry, i.e.\ 
 an action of the topological group $\mathbb{S}^1$. The homotopy orbits of this action are known as \emph{cyclic homology}. The study of cyclic homology was initiated in the 1980s independently by Connes \cite{connes} and Tsygan \cite{tsygan}. 
 Of course, instead of considering the Hochschild complex of an algebra, we can consider the Hochschild complex of a linear or a differential graded category. 
 
The fact that one encounters 
in almost all areas of mathematics
\emph{structured} algebras naturally leads to the following question:
\begin{itemize}
	\item[(Q)]\emph{What kind of additional structure does one need on an algebra such that the cyclic symmetry can be extended in a meaningful way, for example to an action of a much larger topological group?}
	\end{itemize}
This is obviously an open-ended question to which certainly a lot of different answers can be given. For example, if $A$ comes equipped with an involution through anti algebra maps, then one finds an action of $\text{O}(2)=\mathbb{S}^1 \rtimes \Z_2$	
on the Hochschild complex of $A$. In the semidirect product $\mathbb{S}^1 \rtimes \Z_2$, the cyclic group $\Z_2$ with two elements acts on $\mathbb{S}^1$ by reflection.
This extends the theory of cyclic homology and leads to	 \emph{dihedral homology} as introduced by Loday~\cite{loday}.
A very systematic approach to the Hochschild homology of structured algebras is given by Wahl and Westerland in \cite{wahlwesterland,wahl}.

This short article offers a different and, at least at first sight, surprising answer to question~(Q) that applies to an important structure in quantum algebra, namely \emph{ribbon Grothendieck-Verdier structures} introduced by Boyarchenko and Drinfeld \cite{bd} based on Barr's notion of a $\star$-autonomous category \cite{barr}. 
Roughly, 
a Grothendieck-Verdier category is a monoidal category $\cat{C}$ equipped with an equivalence $D:\cat{C}\to\cat{C}^\opp$
called 
\emph{duality functor} compatible in a specific way with the monoidal structure. A ribbon Grothendieck-Verdier structure is additionally equipped with
  a \emph{braiding} (natural isomorphisms $c_{X,Y}: X\otimes Y \to Y\otimes X$ compatible with the monoidal unit and subject to the hexagon axioms) 
and a \emph{balancing} (a natural automorphism $\theta_X : X \to X$
such that $\theta_I=\id_I$ for the monoidal unit $I$ and $\theta_{X\otimes Y}=c_{Y,X}c_{X,Y}(\theta_X\otimes\theta_Y)$ for $X,Y\in\cat{C}$), that additionally satisfies $\theta_{DX}=D\theta_{X}$, see Section~\ref{secgv}.

Grothendieck-Verdier duality allows us to generalize the notion of a \emph{finite tensor category} \cite{etingofostrik} which includes \emph{rigidity} (existence of left/right duals) as part of its definition, i.e.\ it requires that every object $X$ has a dual
$X^\vee$ that comes with an evaluation $X^\vee \otimes X \to I$ and a coevaluation $I\to X\otimes X^\vee$ subject to the so-called zigzag identities. 
The notion of a finite tensor category 
  is the backbone of the approach to quantum algebra laid out in the monograph \cite{egno} by Etingof, Gelaki, Ostrik and Nikshych. Every rigid monoidal category can be seen as a Grothendieck-Verdier category whose duality functor sends an object to its dual,  but not all Grothendieck-Verdier categories are of this form. 

We will be interested in Grothendieck-Verdier categories in a \emph{$k$-linear setting},
where $k$ is a fixed algebraically closed field, more precisely in ribbon Grothendieck-Verdier categories in the symmetric monoidal bicategory $\Lexf$ of $k$-linear abelian categories (subject to some finiteness conditions), left exact functors and natural transformations. 
Moreover, we will require \emph{self-injectivity}, i.e.\ the projective objects must coincide with the injective ones.

Our main result is concerned with the Hochschild complex of a self-injective ribbon Grothendieck-Verdier category $\cat{C}$ in $\Lexf$. The Hochschild complex of $\cat{C}$ is, as usual, the realization of the simplicial vector space
		\begin{equation}
			\begin{tikzcd}
				\dots
				\ar[r, shift left=4] \ar[r, shift right=4] \ar[r] & \displaystyle \bigoplus_{X_0,X_1 \in \Proj\cat{C}}  \cat{C}(X_1,X_0)\otimes \cat{C}(X_0,X_1) \ar[r, shift left=2] \ar[r, shift right=2]
				\ar[l, shift left=2] \ar[l, shift right=2]
				& \displaystyle \bigoplus_{X_0 \in \Proj \cat{C}} \cat{C}(X_0,X_0)\ .  \ar[l] 
			\end{tikzcd}
	\end{equation}
The fact that just the projective objects are used to define the complex is standard in this context. It ensures that if $\cat{C}$ is given, as a linear category, by finite-dimensional modules over a finite-dimensional algebra $A$, the above Hochschild complex is actually equivalent to the `standard one' of $A$ by the so-called \emph{Agreement Principle} \cite{mcarthy,keller}. 
One may see the Hochschild complex as the homotopy coend $\lint^{X\in\Proj\cat{C}}\cat{C}(X,X)$ over the endomorphism spaces of projective objects, and we will use this as our notation for the Hochschild complex. 
We may now state our main result:

	\begin{reptheorem}{mainthm}
	Let $\cat{C}$ be a self-injective ribbon Grothendieck-Verdier category in $\Lexf$.
	Then its duality functor and its balancing induce on the Hochschild complex $\lint^{X\in\Proj\cat{C}}\cat{C}(X,X)$
	an action of the diffeomorphism group $\Diff(\mathbb{S}^1\times\mathbb{D}^2)$ of the solid closed torus that extends the usual cyclic symmetry of the Hochschild complex. 
\end{reptheorem}

Here and elsewhere in the article, diffeomorphisms and mapping classes will always be orientation-preserving. Moreover, a group action on an object in a higher category, such as chain complexes, has to be understood as a \emph{homotopy coherent} action.

We prove in Corollary~\ref{cordep} that generally the $\Diff(\mathbb{S}^1\times\mathbb{D}^2)$-action
 depends on the ribbon Grothen\-dieck-Verdier structure, i.e.\ in contrast to the Hochschild complex and its cyclic action, it is sensitive to more than the linear structure.
 
 Let us highlight concrete situations to which Theorem~\ref{mainthm} applies:
 
 \begin{itemize}
 	
 	\item We had mentioned above that the result applies in particular to \emph{finite ribbon categories} in the sense of \cite{egno} --- by definition these categories are always rigid. A finite ribbon category can be obtained, for example, by taking finite-dimensional modules over a finite-dimensional ribbon Hopf algebra~\cite[XIV.6]{kassel}. In that case, the Hochschild complex $\lint^{X\in\Proj\cat{C}}\cat{C}(X,X)$ is equivalent to the Hochschild complex of $A$. This implies that the Hochschild complex of a finite-dimensional ribbon Hopf algebra comes with a $\Diff(\mathbb{S}^1\times\mathbb{D}^2)$-action, see Example~\ref{exhopf} for more details.
 	
 	\item While the case of finite ribbon categories and in particular ribbon Hopf algebras exhibits a rich class of examples, it is important that the notion of Grothendieck-Verdier duality is designed to go beyond rigid monoidal categories. For example, Allen, Lentner, Schweigert and Wood prove in \cite{alsw} that suitable choices of modules over a vertex operator algebra lead to ribbon Grothendieck-Verdier categories
 	(this includes categories with a non-exact monoidal product which therefore cannot be rigid). To these categories, Theorem~\ref{mainthm} may also be applied if they are self-injective.

 	\end{itemize}

 Using our main result we can also exhibit at least one instance in which there is a non-trivial action of the \emph{higher} homotopy groups of diffeomorphism groups on so-called \emph{differential graded conformal blocks}, see Remark~\ref{dmf}. To the best of our knowledge, this is a phenomenon that has not been observed so far.

	\vspace*{0.2cm}\textsc{Acknowledgments.} We are grateful to Andrea Bianchi, Adrien Brochier, S\o ren Galatius,		
	Christoph Schweigert,
Nathalie Wahl and Simon Wood
for  helpful discussions related to this project.
Additionally, we thank the anonymous referee for helpful suggestions.
LM gratefully acknowledges support by the Max Planck Institute for Mathematics in Bonn.
LW gratefully acknowledges support by 
the Danish National Research Foundation through the Copenhagen Centre for Geometry
and Topology (DNRF151)
and  by the European Research Council (ERC) under the European Union's Horizon 2020 research and innovation programme (grant agreement No.~772960).

\section{The diffeomorphism group of $\mathbb{S}^1 \times \mathbb{D}^2$ via dihedral homology}
In this section, we give a specific model for the diffeomorphism group of $\mathbb{S}^1\times\mathbb{D}^2$ using dihedral homology. 
This will in the next section allow us to write its classifying space $B\Diff (\mathbb{S}^1\times\mathbb{D}^2)$ in a way that is adapted to the algebraic structure of a self-injective ribbon Grothendieck-Verdier category. 

Let us first recall some well-known facts about diffeomorphism groups of handlebodies. Throughout the article,
handlebodies will be three-dimensional, and diffeomorphisms and their mapping classes will be orientation-preserving.
Since any handlebody $H$ is a Haken manifold,
the restriction
$\Diff_0(H)\to \Diff_0(\partial H)$
is a fibration with contractible fiber by~\cite[Theorem~2]{hatcher} (by $\Diff_0$ we denote the identity component of $\Diff$). 
Unless $H$ is $H_{0,0}=\mathbb{B}^3$ or $H_{1,0}=\mathbb{S}^1\times\mathbb{D}^2$ (we use here $H_{g,n}$ to denote the handlebody of genus $g$ and $n$ embedded disks), this proves that $\Diff(H)$ is homotopy discrete, i.e.\ that the map $\Diff(H)\to\Map(H)$ is a homotopy equivalence (because $\Diff(\partial H)$ is homotopy discrete in these cases \cite{earle-eells}). 
It also tells us 
$\Diff_0(H_{1,0})\simeq \mathbb{T}^2$ thanks to $\Diff_0(\mathbb{T}^2)\simeq \mathbb{T}^2$ \cite[Théorème~1]{gramain}.
Next recall that 
we have an exact sequence
\begin{align} 0 \to \Diff_0(H_{1,0})\to\Diff(H_{1,0})\to \Diff(H_{1,0})/\Diff_0(H_{1,0})\to 0 \ ,\label{eqnses}
\end{align}
where  $\Diff(H_{1,0})/\Diff_0(H_{1,0})= \Map(H_{1,0})$ is the mapping class group of $H_{1,0}$. 
The mapping class group $\Map(H_{1,0})\cong \Z\times \Z_2$ is generated by a Dehn twist $T$ along any properly embedded disk in $H_{1,0}$ and the rotation $R$ by $\pi$ around any axis in the plane in which the torus lies~\cite[Theorem~14]{Wajnryb}. \label{labeldescriptioTandR}
Under the isomorphism $\Map(\mathbb{T}^2)\cong \text{SL}(2,\Z)$
sending a mapping class to the induced automorphism on the first homology of $\mathbb{T}^2$, the inclusion $\Map(H_{1,0})\subset \Map(\mathbb{T}^2)$ sends 
\begin{align}
\label{eqnTandRmatrix}	T \mapsto \begin{pmatrix} 1 & 0 \\ 1 & 1 \end{pmatrix}\ , \quad R \mapsto \begin{pmatrix} -1 & \phantom{-}0\\\phantom{-}0&-1 \end{pmatrix} \ . \end{align}
Through the matrix representation~\eqref{eqnTandRmatrix}
of $\Z\times\Z_2$, we have a section of the epimorphism in~\eqref{eqnses}. This gives us
$\Diff(H_{1,0})\cong \Diff_0(H_{1,0})\rtimes \Map(H_{1,0})$. The connected component $\Diff_0(H_{1,0})$ is homotopy equivalent to a torus $\mathbb{T}^2$ as just explained.
The action of $\Map(H_{1,0})$ on $\mathbb{T}^2$ sees $\Map(H_{1,0})$ as subgroup of $\text{SL}(2,\Z)$ which acts on $\mathbb{R}^2 / \Z^2=\mathbb{T}^2$.

In order to present our combinatorial model for $\Diff(H_{1,0})$, we also need to recall the notion of dihedral homology:
Recall that \emph{Connes' cyclic category $\Lambda$} \cite{connes} is the category with objects $\mathbb{N}_0$; we denote the object corresponding to $n\ge 0$ by $[n]$.
A morphism $f: [n]\to [m]$ is given by an
equivalence class of functions $f: \Z \to \Z $ such that $f(i+n+1)=f(i)+m+1$ modulo the relation $f\sim g$ if $f-g$ is a constant multiple of $m+1$.  
The category $\Lambda$ contains the simplex category $\Delta$ as subcategory.
As generating morphisms, it has the face and degeneracy maps that we already know from the simplex category $\Delta$
and the \emph{cyclic permutations}
	$\tau_n : 
	[n] \to [n]$ represented by maps $\Z\to\Z$ that shift by one.
The cyclic permutations fulfill, besides the obvious relation $\tau_n^{n+1} = \id_{[n]}$, further compatibility relations with the face and degeneracy maps. 
We denote by $\vec\Lambda \subset \Lambda$ 
the subcategory of the cyclic category without degeneracy maps. 
This subcategory inclusion is homotopy initial, thereby making $\vec\Lambda^\opp \subset \Lambda^\opp$ homotopy final. We use here the terminology of \cite[Chapter~8.5]{riehl}.

The category $\Lambda$ has a natural action of $\Z_2$ through the \emph{reversal 
	functor} $r: \Lambda \to \Lambda$ which is the identity on objects and sends any morphism $f:[n]\to[m]$ 
in $\Lambda$ to
$r(f):[n]\to [m]$ given by $(r(f)) (p):=   m- f(n-p)$.
We denote by $\Lambda \rtimes \Z_2$ the Grothendieck construction of the functor
$
*\DS \Z_2 \to \Cat$
from the groupoid with one object and automorphism group $\Z_2$ to the category $\Cat$ of categories
sending $*$ to $\Lambda$ and the generator $-1\in\Z_2$ to the reversal functor $r:\Lambda \to \Lambda$
(recall that the Grothendieck construction $\int F$
of a functor $F:\cat{C}\to\Cat$ is the category of pairs $(c,x)$ formed by all $c\in\cat{C}$ and $x\in F(c)$, see e.g.~\cite[Section~I.5]{maclanemoerdijk}).
The category $\Lambda \rtimes \Z_2$ can be identified with the \emph{dihedral category} \cite{loday,spalinski}. 
Restriction to $\vec\Lambda$ yields a functor $\vec r : \vec\Lambda \to \vec\Lambda$. 
This allows us to define $\vec\Lambda \rtimes\Z_2$, the \emph{semidihedral category}, also via a Grothendieck construction. Functors out of the opposite categories of $\Lambda \rtimes \Z_2$ and $\vec\Lambda \rtimes \Z_2$ are called \emph{dihedral objects} and \emph{semidihedral objects}, respectively.
We can see
$(\vec\Lambda \rtimes \Z_2)^\opp$ as the Grothendieck construction of the $\Z_2$-action on $\vec\Lambda^\opp$ through the functor
${\vec r}\, ^\opp : \vec\Lambda^\opp \to \vec\Lambda^\opp$ induced by $r$. 
In other words, $(\vec\Lambda \rtimes \Z_2)^\opp=\vec\Lambda^\opp \rtimes \Z_2$. Similarly, $(\Lambda \rtimes \Z_2)^\opp = \Lambda^\opp \rtimes \Z_2$.

Given an associative algebra $A$ in a symmetric monoidal $\infty$-category $\cat{S}$,
one can build its Hochschild object, i.e.\ the simplicial object in $\cat{S}$ given in~\eqref{eqnHochschildobject}.
If we assume that $\cat{S}$ is cocomplete, we may take the homotopy colimit of~\eqref{eqnHochschildobject} and obtain the \emph{Hochschild homology} of $A$
(of course, unless the target category is chain complexes, this will not give us homology in the traditional sense of the word, but this extended meaning of the word `homology' is standard).
In fact, the Hochschild object 
is actually a cyclic object through the cyclic permutation of the tensor factors. If we take the homotopy colimit over $\Lambda^\opp$, we obtain the \emph{cyclic homology} of $A$ \cite{connes,tsygan}. If $A$ comes equipped with a $\Z_2$-action through anti algebra maps, then the Hochschild object of $A$ actually extends to a dihedral object. One defines the homotopy colimit over $\Lambda^\opp \rtimes \Z_2$ as the \emph{dihedral homology} of $A$ \cite{loday,spalinski}. We denote it by $\DH(A)$. Thanks to the homotopy finality statements given above, the homotopy colimits involved in the computation of Hochschild homology, cyclic homology and dihedral homology may always be computed \emph{without} degeneracies.

Before stating the next result, let us introduce  further notation: For a space $X$ with $G$-action, we will denote by $X_{\horb G}$ the homotopy orbits of the $G$-action on $X$.
Moreover, we denote by $K(G,n)$ the $n$-th Eilenberg-Mac Lane space for the group $G$ (for $n\ge 2$, this means that $G$ is abelian).

\spaceplease
\begin{lemma}\label{lemmadihedralS1}
	Consider the commutative topological algebra $\mathbb{S}^1$ together with its trivial $\Z_2$-action.
	There is a homotopy equivalence 
	\begin{align}
	\DH(\mathbb{S}^1)    \simeq    \Map (\mathbb{S}^1 , K(\Z,2))_{\horb   \mathbb{S}^1 \rtimes\Z_2   }  \ ,
	\end{align}
	where $\mathbb{S}^1 \rtimes\Z_2 $
	(the $\Z_2$-action on $\mathbb{S}^1$ is by reflection) acts
	on the mapping space  $\Map (\mathbb{S}^1 , K(\Z,2))$ as follows:
	\begin{pnum}
		\item The $\mathbb{S}^1$-action on $\Map (\mathbb{S}^1 , K(\Z,2))$ comes from the natural $\mathbb{S}^1$-action on $\mathbb{S}^1$ and precomposition.
		\label{actionpartone}
		\item The $\Z_2$-action on $\Map (\mathbb{S}^1 , K(\Z,2))$ is given by precomposition
		with a reflection and postcomposition with 
		the map $K(\Z,2)\to K(\Z, 2)$ induced by $-1 : \Z\to\Z$. 
		\label{actionparttwo}
		
	\end{pnum}
\end{lemma}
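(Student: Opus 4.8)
The plan is to realize the dihedral object underlying the Hochschild object of $\mathbb{S}^1$ as a free loop space, and then to compute dihedral homology as homotopy orbits for the induced $\Or(2)=\mathbb{S}^1\rtimes\mathbb{Z}_2$-action. Since $\mathbb{S}^1$ is a commutative topological group with classifying space $B\mathbb{S}^1=K(\mathbb{Z},2)$, its Hochschild object is the cyclic bar construction (cyclic nerve) $N^{\mathrm{cyc}}_\bullet\mathbb{S}^1$ with $N^{\mathrm{cyc}}_n\mathbb{S}^1=(\mathbb{S}^1)^{n+1}$. I would invoke the classical identification of its geometric realization with the free loop space $\Map(\mathbb{S}^1,B\mathbb{S}^1)=\Map(\mathbb{S}^1,K(\mathbb{Z},2))$, under which Connes' cyclic $\mathbb{S}^1$-symmetry becomes the rotation action on the source circle, i.e.\ precomposition (see e.g.\ \cite{loday}). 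This identification already produces part \ref{actionpartone} of the statement.

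Next I would use that the homotopy colimit of a cyclic object over $\Lambda^\opp$ is the $\mathbb{S}^1$-homotopy orbits of its realization, together with the dihedral refinement identifying $\hocolim_{(\Lambda\rtimes\mathbb{Z}_2)^\opp}$ with the homotopy orbits for the resulting $\Or(2)$-action on the realization. Granting this, the lemma reduces to determining the $\Or(2)$-action on $\Map(\mathbb{S}^1,K(\mathbb{Z},2))$ coming from the dihedral structure of $\mathbb{S}^1$ with its trivial involution. The rotation part is the $\mathbb{S}^1$-action of \ref{actionpartone}, so what remains is to pin down the reflection $\mathbb{Z}_2\subset\Or(2)$.

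The delicate point, and to my mind the main obstacle, is exactly the form of this reflection: in particular the appearance of the postcomposition with the map $K(\mathbb{Z},2)\to K(\mathbb{Z},2)$ induced by $-1\colon\mathbb{Z}\to\mathbb{Z}$ in \ref{actionparttwo}. I would establish it by computing the realized reflection on homotopy groups and matching. Using evaluation at the basepoint one has $\Map(\mathbb{S}^1,K(\mathbb{Z},2))\simeq\mathbb{S}^1\times K(\mathbb{Z},2)$, with $\pi_1\cong\mathbb{Z}$ the fibre/rotation class and $\pi_2\cong\mathbb{Z}$ the class coming from $K(\mathbb{Z},2)$. The reversal functor $r$, being the identity on objects, reflects the source circle and so contributes precomposition with a reflection $\iota$; but its effect on morphism \emph{targets} (the ``$m-f(n-p)$'' in the definition of $r$) reflects the target direction as well, whose realization $B\Lambda\simeq K(\mathbb{Z},2)$ is thereby sent to its inverse, and it is this second reflection that forces the sign on the coefficients. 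Concretely, pure precomposition with $\iota$ acts by $-1$ on $\pi_1$ — loop reversal is homotopic to the $H$-inverse of $\Omega K(\mathbb{Z},2)$ and hence induces $-1$ on $\pi_1\Omega K(\mathbb{Z},2)=\pi_2 K(\mathbb{Z},2)$ — and by $+1$ on $\pi_2$, while postcomposition with the map induced by $-1$ acts by $-1$ on both groups; the composite of the two therefore acts by $+1$ on $\pi_1$ and $-1$ on $\pi_2$, which is exactly \ref{actionparttwo}. Checking that the dihedral reversal genuinely realizes to this composite, rather than to pure loop reversal, is where the real work lies.

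Finally I would assemble the pieces: the realization of the dihedral object of $\mathbb{S}^1$ is $\Map(\mathbb{S}^1,K(\mathbb{Z},2))$ equipped with the $\Or(2)$-action described by \ref{actionpartone} and \ref{actionparttwo}, and dihedral homology is its $\Or(2)$-homotopy orbits, yielding $\DH(\mathbb{S}^1)\simeq\Map(\mathbb{S}^1,K(\mathbb{Z},2))_{\horb \mathbb{S}^1\rtimes\mathbb{Z}_2}$. The only genuinely non-formal input is the sign analysis of the reflection, so that is the step I would treat most carefully; everything else is an application of standard cyclic and dihedral machinery.
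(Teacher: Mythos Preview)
Your global structure matches the paper's: both compute $\DH(\mathbb{S}^1)$ as the $\Or(2)$-homotopy orbits of the Hochschild homology of $\mathbb{S}^1$ and identify the latter with $\Map(\mathbb{S}^1,K(\mathbb{Z},2))$. The difference lies in how the $\Or(2)$-\emph{equivariant} identification is obtained. The paper writes Hochschild homology as factorization homology $\int_{\mathbb{S}^1}\mathbb{S}^1$, observes that $\mathbb{S}^1=\Omega K(\mathbb{Z},2)$ holds as $E_1$-algebras \emph{with anti-involution} provided one equips $\Omega K(\mathbb{Z},2)$ with ``reverse the based loop and postcompose with the $(-1)$-map on $K(\mathbb{Z},2)$'' (the two signs cancel, recovering the trivial involution on $\mathbb{S}^1$), and then invokes \emph{unoriented} non-abelian Poincar\'e duality \cite{AF} to obtain the $\Or(2)$-equivariant equivalence $\int_{\mathbb{S}^1}\Omega K(\mathbb{Z},2)\simeq\Map(\mathbb{S}^1,K(\mathbb{Z},2))$ with the action of~\ref{actionpartone} and~\ref{actionparttwo} already built in. Your route via the classical cyclic-bar/free-loop identification plus a $\pi_*$-check is more elementary but leaves that equivariance to be verified by hand.

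On that verification: your heuristic that the ``$m-\cdots$'' in the reversal functor reflects the target and hence acts by $-1$ on $B\Lambda\simeq K(\mathbb{Z},2)$ is not the right mechanism. The $K(\mathbb{Z},2)$ in the mapping space is $B$ of the \emph{algebra} $\mathbb{S}^1$, not of the cyclic-symmetry circle, so invoking $B\Lambda$ conflates the two. The clean reason --- which is precisely what the paper's $\Omega K(\mathbb{Z},2)$ argument encodes --- is that under $|N^{\mathrm{cyc}}_\bullet G|\simeq LBG$, pure loop reversal on $LBG$ \emph{inverts holonomies}; hence the dihedral reflection for the \emph{trivial} anti-involution on $G=\mathbb{S}^1$ (which leaves the $g_i$ alone rather than inverting them) must differ from pure loop reversal by postcomposition with $B(\mathrm{inv})$, i.e.\ by the $(-1)$-map on $K(\mathbb{Z},2)$. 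With this correction your $\pi_1/\pi_2$ bookkeeping goes through and the argument is complete; the paper's appeal to Ayala--Francis simply buys this as a packaged theorem.
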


\begin{proof} 
	Since the geometric realization of $\vec\Lambda^\opp \rtimes \Z_2$ is equivalent to $\mathbb{S}^1 \rtimes \Z_2$ \cite[Proposition~3.11]{loday}, 
	we may describe the dihedral homology of $\mathbb{S}^1$ as the homotopy $\mathbb{S}^1 \rtimes \Z_2$-orbits of the Hochschild homology of $\mathbb{S}^1$. 
	This follows from the $\Z_2$-equivariant analogue of~\cite[Proposition~B.5]{ns}. As a consequence,
	$\DH(\mathbb{S}^1)     \simeq  \left(  \int_{\mathbb{S}^1} \mathbb{S}^1 \right)_{\horb   \mathbb{S}^1 \rtimes\Z_2   }$, where $\int _{\mathbb{S}^1} \mathbb{S}^1$ is the factorization homology of the commutative algebra $\mathbb{S}^1$ evaluated on the circle.
	The  $\mathbb{S}^1\rtimes \Z_2$-action
	on  $\int_{\mathbb{S}^1} \mathbb{S}^1$ can be naturally understood as follows~\cite[Example~2.11]{AF}:
	The space $\mathbb{S}^1=K(\Z,1)$ is an `unoriented topological $E_1$-algebra', i.e.\ an $E_1$-algebra with an anti algebra involution. The involution is
	trivial in this case (the identity of $\mathbb{S}^1 = K(\Z,1)$ is indeed an anti algebra involution since $\Z$ is abelian). This allows us to compute its factorization homology
	over any \emph{unoriented} 1-dimensional manifold. The value of factorization homology $\int_{\mathbb{S}^1}K(\Z,1)$ on $\mathbb{S}^1$ is the Hochschild complex. Hence, the group of not necessarily orientation preserving diffeomorphisms of $\mathbb{S}^1$, which is homotopy equivalent to $\text{O}(2)=\mathbb{S}^1\rtimes \Z_2$, acts on the factorization homology $\int_{\mathbb{S}^1}\mathbb{S}^1$, thereby giving rise to the action appearing in the computation of dihedral homology.

	We have furthermore $\mathbb{S}^1=K(\Z,1)=\Omega K(\Z,2)$ as algebras (by $\Omega$ we denote the based loop space).
	This also holds as algebras with anti algebra involution if
	we use for $\Omega K(\Z,2)$ the involution 
	given by the reflection of based loops together with the involution on $K(\Z,2)$ described in~\ref{actionparttwo}
	(see also~\cite[Section 4]{AF}). 
	With these definitions, the $\Z_2$-action on $\Omega K(\Z,2)$ is indeed trivial. This ensures that the identification $\mathbb{S}^1=K(\Z,1)=\Omega K(\Z,2)$ is really compatible with the $\Z_2$-structure.
	Now non-abelian Poincaré duality~\cite[Corollary 4.6]{AF} gives us
	$\int_{\mathbb{S}^1} \mathbb{S}^1 =\int_{\mathbb{S}^1} \Omega K(\Z,2)\simeq  \Map (\mathbb{S}^1 , K(\Z,2))$, 
	 and the last homotopy equivalence is in fact
	$\mathbb{S}^1 \rtimes\Z_2$-equivariant if
	$\Map (\mathbb{S}^1 , K(\Z,2))$ is equipped with the $\mathbb{S}^1 \rtimes\Z_2$-action
	described in \ref{actionpartone} and~\ref{actionparttwo}. For the $\Z_2$-action, this was just explained; for the $\mathbb{S}^1$-action, it is a property of the Poincaré duality map.
\end{proof}

\begin{theorem}\label{thmbdiff}
	There is a homotopy equivalence of topological spaces
\begin{align}
		\DH(\mathbb{S}^1)    \simeq   B \Diff (H_{1,0}) \ . 
		\end{align}
\end{theorem}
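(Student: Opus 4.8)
The plan is to reduce both sides of the asserted equivalence to homotopy orbits of $B\mathbb{T}^2=K(\Z^2,2)$ by the group $\Z\times\mathbb{Z}_2$ and to match the two resulting descriptions. Starting from the right-hand side, the section of~\eqref{eqnses} constructed just before the statement gives $\Diff(H_{1,0})\cong\Diff_0(H_{1,0})\rtimes\Map(H_{1,0})$ with $\Diff_0(H_{1,0})\simeq\mathbb{T}^2$ and $\Map(H_{1,0})\cong\Z\times\mathbb{Z}_2$ acting through the matrices $T,R$ of~\eqref{eqnTandRmatrix}. Since delooping a semidirect product of topological groups produces homotopy orbits, I would write
\[
B\Diff(H_{1,0})\ \simeq\ \big(B\mathbb{T}^2\big)_{\horb(\Z\times\mathbb{Z}_2)}\ =\ K(\Z^2,2)_{\horb(\Z\times\mathbb{Z}_2)},
\]
where $\Z=\langle T\rangle$ and $\mathbb{Z}_2=\langle R\rangle$ act on $\pi_2=\pi_1(\mathbb{T}^2)=\Z^2$ via~\eqref{eqnTandRmatrix}, i.e.\ $T$ by $\left(\begin{smallmatrix}1&0\\1&1\end{smallmatrix}\right)$ and $R$ by $-\id$.

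On the dihedral side, Lemma~\ref{lemmadihedralS1} gives $\DH(\mathbb{S}^1)\simeq\Map(\mathbb{S}^1,K(\Z,2))_{\horb(\mathbb{S}^1\rtimes\mathbb{Z}_2)}$. Because $\mathbb{Z}_2$ normalises $\mathbb{S}^1$ inside $\mathbb{S}^1\rtimes\mathbb{Z}_2$, I would take the orbits in two stages, first by $\mathbb{S}^1$ and then by the residual $\mathbb{Z}_2$. The heart of the argument is the claim $\Map(\mathbb{S}^1,K(\Z,2))_{\horb\mathbb{S}^1}\simeq K(\Z^2,2)_{\horb\Z}$ with $\Z$ acting by the Dehn twist $T$. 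To see this, note that $\Map(\mathbb{S}^1,K(\Z,2))=LK(\Z,2)\simeq K(\Z,2)\times\mathbb{S}^1$ is connected with $\pi_2=\Z\langle a\rangle$ (the evaluation/constant-loop class, corresponding to rotation of the normal disk $\mathbb{D}^2$) and $\pi_1=\Z\langle t\rangle$ (the based-loop winding). Feeding this into the Borel fibration $LK(\Z,2)\to LK(\Z,2)_{\horb\mathbb{S}^1}\to B\mathbb{S}^1=K(\Z,2)$ and using that the orbit map of the constant loop is nullhomotopic, the boundary map $\pi_2(B\mathbb{S}^1)\to\pi_1$ vanishes; hence the total space has $\pi_1=\Z\langle t\rangle$ and $\pi_2=\Z\langle a,b\rangle$, with $b$ lifting the generator of $\pi_2(B\mathbb{S}^1)$ (rotation of the core $\mathbb{S}^1$). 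All higher groups vanish, and there is no $k$-invariant, since the $1$-type $B\Z=\mathbb{S}^1$ has vanishing cohomology in degree $3$ with any local coefficients. Such a space is determined by the action of $\pi_1$ on $\pi_2$, which I claim is exactly $T$.

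The action on $a$ is trivial because $LK(\Z,2)$ is a simple space and $a$ comes from the fibre, so only the integer $c$ in $t\cdot b=b+c\,a$ remains; matching the target forces $c=\pm1$. Geometrically this is the statement that dragging a winding-one loop once around the rotation orbit sweeps out the fundamental class of the torus exactly once, i.e.\ it is the intersection pairing $H^1(\mathbb{S}^1)\otimes H^1(\mathbb{S}^1)\to H^2(\mathbb{T}^2)=\Z$, which is $\pm1$. I would make this precise by computing the relevant transgression in the Serre spectral sequence of the Borel fibration (equivalently, by restricting the family to a single $\mathbb{S}^1$-orbit of the winding-one loop, producing a map $\mathbb{T}^2\to LK(\Z,2)_{\horb\mathbb{S}^1}$ whose pairing with $a$ is $\pm1$). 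This identifies the $\pi_1$-action with a Dehn twist, i.e.\ with $T$ in the basis $(a,b)$ up to the conventions of~\eqref{eqnTandRmatrix}, and hence gives the displayed equivalence.

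Finally I would track the residual $\mathbb{Z}_2=\langle R\rangle$, which by part~\ref{actionparttwo} of Lemma~\ref{lemmadihedralS1} acts by reflection of the source circle together with $-1$ on $K(\Z,2)$. On $\pi_2$ it sends $a\mapsto-a$ (from $-1$ on the target) and $b\mapsto-b$ (reflection reverses the rotation orbit), hence acts by $-\id$; on $\pi_1$ it fixes $t$, since reflection and $-1$ each reverse the winding and their composite preserves it. These are precisely the data of $R$ in~\eqref{eqnTandRmatrix} and of the direct-product structure $\Z\times\mathbb{Z}_2$, so the equivalence of the previous paragraph is $\mathbb{Z}_2$-equivariant. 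Taking homotopy $\mathbb{Z}_2$-orbits of both sides then yields
\[
\DH(\mathbb{S}^1)\ \simeq\ \big(K(\Z^2,2)_{\horb\Z}\big)_{\horb\mathbb{Z}_2}\ =\ K(\Z^2,2)_{\horb(\Z\times\mathbb{Z}_2)}\ \simeq\ B\Diff(H_{1,0}).
\]
I expect the main obstacle to be establishing that the $\pi_1$-action on $\pi_2$ is a Dehn twist rather than some other element of $\mathrm{GL}_2(\Z)$ --- this single integer is what makes the two a priori unrelated homotopy quotients coincide --- together with the bookkeeping needed to promote the $\mathbb{S}^1$-level equivalence to a genuinely $\mathbb{Z}_2$-equivariant one so that the two stages of orbits can be compared.
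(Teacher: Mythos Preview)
Your proposal is correct and arrives at the same identification, but takes a genuinely different route from the paper. The paper does not work with the Borel fibration and its long exact sequence of homotopy groups; instead it models $\Map(\mathbb{S}^1,K(\Z,2))$ directly as the functor $2$-groupoid $[*\DS\Z,*\DS*\DS\Z]$, shows this is equivalent to $(*\DS\Z)\times(*\DS*\DS\Z)\simeq K(\Z,1)\times K(\Z,2)$, and then reads off the $\mathbb{S}^1\rtimes\Z_2$-action explicitly at the $2$-categorical level: the $\mathbb{S}^1$-action is the natural automorphism $S$ of the identity with $S_*=0$ and $S_n=n$, and the $\Z_2$-action is trivial on $1$-morphisms and $-1$ on $2$-morphisms. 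From this one loops once to obtain $\Omega\DH(\mathbb{S}^1)\simeq(\mathbb{S}^1\times\Z)\rtimes(\mathbb{S}^1\rtimes\Z_2)$ with the action spelled out, and this group is matched with $\Diff(H_{1,0})$ by inspection.

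The practical difference is exactly the obstacle you flag. In your approach the integer $c$ in $t\cdot b=b+c\,a$ has to be extracted by a transgression or torus argument, and the $\Z_2$-equivariance of the equivalence $\Map(\mathbb{S}^1,K(\Z,2))_{h\mathbb{S}^1}\simeq K(\Z^2,2)_{h\Z}$ has to be argued separately via Postnikov uniqueness. In the paper's approach both of these are absorbed into the single formula $S_n=n$: that formula \emph{is} the statement $c=1$ (it says that going once around the rotation circle along the $1$-morphism $n$ picks up the $2$-morphism $n$), and since the $\Z_2$-action is computed in the same $2$-groupoid model the equivariance is automatic. Your route has the advantage of using only standard fibration and Postnikov technology, while the paper's $2$-groupoid computation is shorter and sidesteps both of the points you correctly identified as requiring care. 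One small remark: the phrase ``matching the target forces $c=\pm1$'' reads as circular; you should rely solely on your torus/transgression argument for that step.
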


\begin{proof}
	The strategy of the proof is to explicitly compute the homotopy orbits of the $\mathbb{S}^1 \rtimes\Z_2$-action on the mapping space
	$\Map (\mathbb{S}^1 , K(\Z,2))$ from Lemma~\ref{lemmadihedralS1}. To this end, we will use that $\mathbb{S}^1$ and $K(\Z,2)$ are both realizations of 2-groupoids; in fact, $\mathbb{S}^1$ is the realization of $* \DS \Z$ (one object plus morphisms given by $\Z$), and $K(\Z,2)$ is the realization of $*\DS * \DS \Z$ (one object plus one 1-morphism and 2-morphisms given by $\Z$).
	The mapping space $\Map (\mathbb{S}^1 , K(\Z,2))$ can be described as the 2-groupoid 
	$[*\DS \Z , *\DS * \DS \Z ]$ of functors
	$\star \DS \Z\to\star \DS \star \DS \Z$, natural transformations and modifications. Without loss of generality, we can assume that
	units are preserved strictly. Up to equivalence,
	$[*\DS \Z , *\DS * \DS \Z ]$ has 
	only one object given by the constant functor $c_*$ at $*$. A natural transformation $\sigma : c_* \to c_*$ 
	consists of the unique 1-morphism $\sigma_*: c_*(*)\to c_*(*)$ together with a 2-morphism $\sigma_n : 
	c_*(n) \circ  \sigma_* \to  \sigma_* \circ c_*(n) $ for every $n\in \Z$. This 2-morphism is an element of $\Z$, and since $\sigma$ is supposed to be natural, the map
	$n\mapsto \sigma_n$
	is a group morphism $\Z\to \Z$. Hence, the 1-morphisms of $[*\DS \Z , *\DS * \DS \Z ]$ correspond each to an element in $\Z$. 
	Let $\sigma$ and $\sigma' $ be natural transformations as above. A modification $\omega : \sigma \to 
	\sigma'$ consists of a 2-morphism $\omega : \sigma_* \to \sigma'_*$ such that $ \sigma'_n \circ  \omega =
	\omega \circ \sigma_n $. This implies that 2-morphisms between $\sigma$ and $\sigma'$ only exist if $\sigma'=\sigma$, and in this case, all
	values of $\omega \in \Z $ are allowed. The composition is straightforward to compute, and we find $[*\DS \Z , *\DS * \DS \Z ] \simeq ( *\DS \Z) \times (* \DS * \DS \Z)$. As a topological space, this is of course $K(\Z,1)\times K(\Z , 2 )$.
	Next we describe the $\mathbb{S}^1\rtimes \Z_2$-action on this space:

\begin{itemize}
	\item The $\mathbb{S}^1$-action is described by
	a natural automorphism of the identity $S: \id_{( *\DS \Z) \times (* \DS * \DS \Z)} \to \id_{( *\DS \Z) \times (* \DS * \DS \Z)} $.  Concretely, 
	$S$ consists of a 1-morphism $S_{*}: *\to*$ at the only object $*\in ( *\DS \Z) \times (* \DS * \DS \Z)$ which is
	given by $0\in \Z $,
	and for any 1-morphism $n : * \to *$ a 2-morphism $S_n : n \to n$ which is given by $n$. 
	This follows directly from unpacking the bicategorical definitions for the precomposition with the natural isomorphism $s: \id_{*\DS \Z} \to \id_{*\DS \Z} $ whose value at the only object $*$ is $s_*=1\in\Z$.
		
	\item 	The $\Z_2$-action is given by the involution $( *\DS \Z) \times (* \DS * \DS \Z) \to ( *\DS \Z) \times (* \DS * \DS \Z)$  whose action on 1-morphisms is trivial and multiplies 2-morphisms by $-1$. This follows by unpacking the
	definition of the action by precomposition with the functor $*\DS \Z \to * \DS \Z $ given by $-1$ 
	on 1-morphisms combined with postcomposition by the functor $*\DS * \DS \Z \to * \DS * \DS \Z $ given by $-1$ on 2-morphisms.
\end{itemize}
	After combining this with Lemma~\ref{lemmadihedralS1}, we arrive at 
	\begin{align}\label{Eq: HQ}
		\DH(\mathbb{S}^1) \simeq (K(\Z,1)\times K(\Z, 2))_{\horb   \mathbb{S}^1 \rtimes \Z_2} \ ,   \quad \text{and hence}\quad \Omega \DH(\mathbb{S}^1) \simeq (\mathbb{S}^1\times \Z ) \rtimes (\mathbb{S}^1\rtimes \Z_2) \ , 
	\end{align} 
		where $\mathbb{S}^1\rtimes \Z_2$ acts on
	$\mathbb{S}^1\times \Z$ as follows:
	\begin{itemize}
		\item
		$\Z_2$ acts trivially on $\Z$ and by reflection on $\mathbb{S}^1$, \item and $x\in \R /\Z = \mathbb{S}^1$ acts 
		trivially on $\mathbb{S}^1$ and sends $(0,n)\in \mathbb{S}^1\times \Z $ to $ (nx , n) $. 
	\end{itemize}
	The group $(\mathbb{S}^1\times \Z ) \rtimes (\mathbb{S}^1\rtimes \Z_2)$ is isomorphic to $\Diff(H_{1,0})$ after a straightforward rewriting.
	 This concludes the proof.
\end{proof}

\spaceplease
\section{The Hochschild complex of a self-injective ribbon Grothendieck-Verdier category\label{secgv}}
In this section, we recall the notion of Grothendieck-Verdier duality from \cite{bd} and prove the main result.
We use here the conventions from \cite{cyclic} which are dual to the ones from~\cite{bd}.

\begin{definition} A \emph{Grothendieck-Verdier category} is a monoidal category $\cat{C}$ with monoidal product $\otimes$ and an object $K\in\cat{C}$ such that for all $X \in \cat{C}$ the hom functor $\cat{C}(K,X\otimes-)$ is representable (let us denote the representing object by $DX \in \cat{C}$; this means that we have  $\cat{C}(K,X\otimes-)\cong\cat{C}(DX,-)$),
		and such that the functor $\cat{C}\to\cat{C}^\opp$ sending $X$ to $DX$ is an equivalence.
	One calls $K$ the \emph{dualizing object} and $D$ the \emph{duality functor}.
\end{definition}

Building on the notion of a braiding and a balancing on a monoidal category (both notions were recalled in the introduction), we may now define:

\begin{definition}\label{defbalancedbraided} A \emph{ribbon Grothendieck-Verdier category} is a Grothen\-dieck-Verdier category whose underlying monoidal category is equipped with a braiding and a balancing such that $\theta_{DX}=D\theta_X$ for $X\in\cat{C}$. \end{definition}
We will consider ribbon Grothendieck-Verdier categories in a \emph{linear setting}. To this end, let us establish some terminology: For an algebraically closed field $k$ that we fix for the rest of the article, a \emph{finite category} is a $k$-linear abelian category with finite-dimensional morphism spaces, finitely many isomorphism classes of simple objects and enough projective objects; additionally, one requires every object to have finite length. One can now define a symmetric monoidal bicategory $\Lexf$ of finite linear categories, left exact functors and natural transformations. The monoidal product is the Deligne product. We refer e.g.\ to \cite{fss} for an overview.

\begin{definition}
	A \emph{ribbon Grothendieck-Verdier category in $\Lexf$} is an object $\cat{C}\in\Lexf$ equipped with a ribbon Grothendieck-Verdier structure on the underlying category and a lift of the ribbon Grothendieck-Verdier structure to structure \emph{inside} $\Lexf$.
\end{definition} This means in particular that the monoidal product 
will be left exact by construction. This might seem a little confusing because monoidal products are rather right exact than left exact in practice. But note that for a Grothendieck-Verdier category in $\Lexf$, the \emph{opposite category} will have a right exact monoidal product.

\begin{definition}
	A Grothendieck-Verdier category $\cat{C}$ in $\Lexf$
	is called \emph{self-injective} if it is self-injective as linear category, i.e.\ if the projective objects of $\cat{C}$ are exactly the injective ones.
\end{definition}

\begin{remark}\label{remselfinjective}
	Self-injectivity ensures that the duality functor $D:\cat{C}\to\cat{C}^\opp$ preserves projective objects. This can be seen as follows: For any $X\in\cat{C}$, the object $DX$ is always injective because $D$ is an equivalence from $\cat{C}$ to $\cat{C}^\opp$, and by self-injectivity $DX$ is projective. For finite tensor categories (which are rigid by definition), this assumption is automatically satisfied. 
\end{remark}

We are now ready to prove the main result:
	
	\begin{theorem}\label{mainthm}
		Let $\cat{C}$ be a self-injective ribbon Grothendieck-Verdier category in $\Lexf$.
		Then its duality functor and its balancing induce on the Hochschild complex $\lint^{X\in\Proj\cat{C}}\cat{C}(X,X)$
		an action of the diffeomorphism group $\Diff(H_{1,0})$ of the solid closed torus that extends the usual cyclic symmetry of the Hochschild complex. 
		\end{theorem}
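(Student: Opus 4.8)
The plan is to produce the $\Diff(H_{1,0})$-action by transport along the identification of Theorem~\ref{thmbdiff}. A homotopy-coherent action of $\Diff(H_{1,0})$ on the chain complex $\lint^{X\in\Proj\cat{C}}\cat{C}(X,X)$ is the same datum as a functor $B\Diff(H_{1,0})\to\Ch$ sending the basepoint to the Hochschild complex. Since $B\Diff(H_{1,0})\simeq\DH(\mathbb{S}^1)$ and, by the proof of Theorem~\ref{thmbdiff}, $\Omega\DH(\mathbb{S}^1)\simeq(\mathbb{S}^1\times\Z)\rtimes(\mathbb{S}^1\rtimes\Z_2)$, the group splits into an $\Or(2)=\mathbb{S}^1\rtimes\Z_2$ factor and a normal factor $\mathbb{S}^1\times\Z$ governed by the $K(\Z,2)$-coefficient appearing in Lemma~\ref{lemmadihedralS1}. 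The strategy is to match each factor with a piece of the ribbon Grothendieck-Verdier structure on $\cat{C}$ and then to glue the matches into a single coherent action.

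First I would exhibit the \emph{cyclic object} underlying the Hochschild complex: the simplicial vector space defining it, with its cyclic rotation permuting the hom-space tensor factors, is a functor $\Lambda^\opp\to\Ch$ whose realization is $\lint^{X\in\Proj\cat{C}}\cat{C}(X,X)$. This realization carries the usual $\mathbb{S}^1$-action, the cyclic symmetry to be extended, and supplies the longitudinal circle of $\Diff_0(H_{1,0})\simeq\mathbb{T}^2$. Next I would promote this to a \emph{dihedral object} using the duality functor $D$. By self-injectivity $D$ preserves projectives (Remark~\ref{remselfinjective}), so it acts on the indexing coproduct over $\Proj\cat{C}$, and, being a contravariant equivalence, it realizes the reversal functor $r$ on $\Lambda$; the ribbon compatibility $\theta_{DX}=D\theta_X$ of Definition~\ref{defbalancedbraided} makes $D$ compatible with the remaining structure maps. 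This upgrades the functor to a dihedral object $(\Lambda\rtimes\Z_2)^\opp\to\Ch$, whose realization (computable without degeneracies over $\vec\Lambda^\opp\rtimes\Z_2$ by the homotopy-finality statements of Section~2) therefore carries an $\Or(2)=\mathbb{S}^1\rtimes\Z_2$-action extending the cyclic one, with $D$ implementing the rotation $R$ of~\eqref{eqnTandRmatrix}.

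The remaining factor $\mathbb{S}^1\times\Z$ I would extract from the \emph{balancing}. The ribbon twist $\theta$, a natural automorphism of $\id_{\cat{C}}$, acts on the Hochschild complex and should furnish the Dehn twist $T$ (the factor $\Z$, matching the matrix in~\eqref{eqnTandRmatrix}) together with the second, meridional circle of $\mathbb{T}^2$ associated with the framing direction of the solid torus; this is precisely the $\Or(2)$-equivariant $K(\Z,2)$-worth of structure isolated in Lemma~\ref{lemmadihedralS1}, with the reflection acting on $\theta$ through $\theta_{DX}=D\theta_X$ so that the two pieces assemble into the semidirect product $(\mathbb{S}^1\times\Z)\rtimes(\mathbb{S}^1\rtimes\Z_2)\cong\Diff(H_{1,0})$. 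Thus duality and balancing, read through the dictionary of Section~2, account for exactly the generators and components of $\Diff(H_{1,0})$.

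The main obstacle is \emph{coherence}: it is not enough to produce the three generators — cyclic rotation, duality/reflection, and ribbon twist — together with the relations of $\Map(H_{1,0})\cong\Z\times\Z_2$ and a $\mathbb{T}^2$-worth of components; one must show that these data assemble into a genuinely homotopy-coherent action of the full topological group $\Diff(H_{1,0})$, including its higher homotopy. Concretely, this means upgrading the generator-and-relation bookkeeping to a comparison of diagrams, identifying the Hochschild complex equipped with its $D$- and $\theta$-structure with the value at the basepoint of a functor out of the dihedral model of $B\Diff(H_{1,0})=\DH(\mathbb{S}^1)$, and verifying that this comparison is equivariant for the $\mathbb{S}^1\rtimes\Z_2$-action exactly as in the (unoriented Poincaré-duality) identification of Lemma~\ref{lemmadihedralS1}. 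Establishing this coherence, rather than checking any single structure map, is where the real work lies.
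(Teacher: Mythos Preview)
Your outline is strategically sound and matches the paper's architecture: transport along Theorem~\ref{thmbdiff}, let the cyclic structure give one circle, let the balancing supply the other circle together with the Dehn twist $T$, and let $D$ supply the reflection $R$. You also correctly isolate coherence as the crux. The gap is that you stop precisely there: you do not propose any mechanism by which the balancing organizes into a \emph{homotopy-coherent} $\mathbb{S}^1\times\mathbb{Z}$-action compatible with the cyclic $\mathbb{S}^1$. Saying that $\theta$ ``should furnish'' the meridional circle and the $K(\mathbb{Z},2)$-worth of structure from Lemma~\ref{lemmadihedralS1} is a statement of the problem, not a solution; a natural automorphism of $\id_{\cat{C}}$ only visibly gives a $\mathbb{Z}$-action on the Hochschild complex, and promoting that to an $\mathbb{S}^1$-action intertwined with the cyclic one is exactly what needs an argument.

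The paper supplies this missing mechanism, and it is the heart of the proof. Rather than first passing to a dihedral object and then adding the balancing, one first packages the balancing together with the cyclic structure as follows. Let $D:\Lambda^\opp\to\Cat$ be the cyclic object $[n]\mapsto(\star\DS\mathbb{Z})^{\times(n+1)}$ whose realization computes the cyclic homology of the topological algebra $\mathbb{S}^1$. The balancing lets one lift the Hochschild simplicial object to a functor $H:\int D\to\Ch$ out of the Grothendieck construction (the $\mathbb{Z}$'s act by precomposition with powers of $\theta$, and naturality of $\theta$ gives the required squares). One then invokes the paracyclic category $\Lambda_\infty$: replacing $(\star\DS\mathbb{Z})^{\times(n+1)}$ by $E\mathbb{Z}^{\times(n+1)}$ produces $\int D_\infty\simeq\Lambda_\infty^\opp$, and $\int D$ is the quotient of $\int D_\infty$ by the action of $G\rtimes\mathbb{S}^1$ with $BG=\int_{\mathbb{S}^1}\mathbb{S}^1$. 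Homotopy finality of $\Delta^\opp\to\Lambda_\infty^\opp$ then shows that $|H|$, the Hochschild complex, carries a coherent action of $G\rtimes\mathbb{S}^1=\Omega\big((\int_{\mathbb{S}^1}\mathbb{S}^1)_{\horb\mathbb{S}^1}\big)$. Only \emph{after} this does one bring in $D$ and the pivotal structure to add the $\mathbb{Z}_2$, using $\theta_{DX}=D\theta_X$ to see that $\mathbb{Z}_2$ intertwines the previous action via the reversal, yielding $\Omega\DH(\mathbb{S}^1)=\Diff(H_{1,0})$. In short: the order is balancing-then-duality, and the device that makes the balancing coherent is the Grothendieck construction over the cyclic model of $\mathbb{S}^1$ together with the paracyclic/Nikolaus--Scholze machinery.
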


	\begin{proof}
		For $n\ge 0$,
		we define the vector space
		\begin{align}\label{eqndefHn}
		H[n] := \displaystyle \bigoplus_{X_0,\dots,X_n \in \Proj\cat{C}}  \cat{C}(X_n,X_{n-1})\otimes \dots \otimes \cat{C} (X_1,X_0)  \otimes \cat{C}(X_0,X_n) 
		\end{align}
		of loops of morphisms in $\cat{C}$ running through $n+1$ projective objects. We can see $H[n]$ as chain complex concentrated in degree zero.
		Now the precomposition with the balancing on each of the different morphisms gives us a functor \begin{align}\label{eqnfunctorsHn}	H[n] : (\star \DS \Z)^{\times (n+1)} \to \Ch\end{align} 
		that we denote by $H[n]$ again, by a slight abuse of notation. In more detail, it sends the only object of $(\star \DS \Z)^{ n+1} $ to $H[n]$ and the morphism $	(\ell_0,\dots,\ell_n) \in\Z^{\times (n+1)}$ to the map
		\begin{align}
		\left( X_n \ra{f_n} X_{n-1} \to \dots X_0 \ra{f_0} X_n  \right) \mapsto \left( X_n \ra{ f_n \theta_{X_{n}}^{\ell_n} } X_{n-1} \to \dots X_0 \ra{         f_0            \theta_{X_0}^{\ell_0}} X_n  \right)
		\end{align}
		given by precomposition with the balancing. In fact, we might as well postcompose; this would give us the same result thanks to the naturality of the balancing.

		Consider now the cyclic object used to compute cyclic homology of the topological algebra $\mathbb{S}^1$, but seen now as category-valued functor $D:   \Lambda^\opp 
		\to \Cat$ sending $[n] \to (\star \DS \Z)^{\times (n+1)}$. 
		The key observation is that the functors~\eqref{eqnfunctorsHn} extend to a functor
		\begin{align}
		H: \int D \to \Ch   \label{eqnfunctorH}
		\end{align}
		out of the Grothendieck construction $\int D$ of the functor $D:   \Lambda^\opp  
		\to \Cat$ by means of the usual cyclic structure of the Hochschild complex. In order to define this functor, recall that by the universal property of the Grothendieck construction
		 it suffices to give functors $(\star \DS \Z)^{\times (n+1)} \to \Ch$ (we have these already, see~\eqref{eqnfunctorsHn}) plus natural transformations $\alpha_f$ filling the triangles
		\begin{equation}\label{gdconeqn}
		\begin{tikzcd}
		(\star \DS \Z)^{\times (n+1)}  \ar[rrd,"\text{$H[n]$}"] \ar[dd,swap,"D(f)"] &  \ar[ldd, Rightarrow, shorten <=0.75cm, shorten >=0.01cm, "\alpha_f"] & \\ 
		& & \Ch \\
		(\star \DS \Z)^{\times (m+1)} \ar[rru,swap,"\text{$H[m]$}"] & & 
		\end{tikzcd}
		\end{equation}
		for every morphism $f$ in $\Lambda^\opp$ such that these transformations respect the composition in $\Lambda^\opp$.
		It is clear how to define the needed transformation $\alpha_f$ because the vector spaces~\eqref{eqndefHn} already form a cyclic object in the standard way. 
		We only need to verify the naturality of the $\alpha_f$ which is a direct consequence of the fact that the balancing is a natural transformation.
		
		In the next step, we will make use of the \emph{paracyclic category} $\Lambda_\infty$ \cite[Appendix~B]{ns}, a contractible category with $\mathbb{S}^1$-action such that $\Lambda = \Lambda_\infty / \mathbb{S}^1$.  
		Consider now the functor $\Lambda^\opp \to \Cat$ sending $[n]$ to the action groupoid $E \Z^{\times (n+1)} :=\Z^{\times (n+1)} \DS  \Z^{\times (n+1)}$ 
		of the regular free and transitive action of $\Z^{\times (n+1)}$ on itself, i.e.\ the total space of the universal $\Z^{\times (n+1)}$-bundle. 
		Precomposition with the quotient functor $\Lambda^\opp _\infty \to \Lambda^\opp=\Lambda_\infty^\opp / \mathbb{S}^1$ 
		 yields a functor $D_\infty : \Lambda_\infty^\opp \to \Cat$. 
		The quotient functors $\Lambda^\opp _\infty \to \Lambda^\opp$ and $E \Z^{\times (n+1)} = \Z^{\times (n+1)} \DS  \Z^{\times (n+1)}\to (\star \DS  \Z)^{\times (n+1)}$ induce a functor $Q:\int D_\infty \to \int D$. 
		Since $E\Z^{\times (n+1)}  \simeq \star$, we have an equivalence $ \Lambda_\infty^\opp \ra{\simeq} \int D_\infty$. 
		
		By sending $[n]$ to $\Z^{\times (n+1)}$, we obtain a cyclic group $C$ that we may see also as paracyclic group. By realization it gives us a topological group $G$ with $BG= \int_{\mathbb{S}^1} \mathbb{S}^1$. 
		There is a $C$-action on $\int D_\infty$ induced by the action $\Z^{\times (n+1)}$-action on $E\Z^{\times (n+1)}$.
		 The $C$-action on $\int D_\infty$ and the $\mathbb{S}^1$-action on the paracyclic category $\Lambda_\infty$
		combine into an action of $C\rtimes \mathbb{S}^1$ on $\int D_\infty$ (the $\mathbb{S}^1$-action on $C$ exists because it is	 a cyclic object) such that $\left( \int D_\infty \right) / (  C\rtimes \mathbb{S}^1)=\int D$.
		This tells us that any functor $F:\int D \to \Ch$ can equivalently be described as the underlying functor $FQ:\int D_\infty\to \Ch$
	plus the $C\rtimes \mathbb{S}^1$-equivariance of this functor, where $\Ch$ is equipped with the trivial $C\rtimes \mathbb{S}^1$-action. 
	As a result, the homotopy colimit of
	$FQ$
	 over $\int D_\infty$ carries an action of $G\rtimes \mathbb{S}^1$. 
	 By \cite[Theorem~B.3]{ns} the paracyclic category comes with a homotopy final functor $\Delta^\opp \to \Lambda_\infty^\opp$. Therefore, the functor 
	 $\Delta^\opp \to \Lambda_\infty^\opp \ra{\simeq} \int D_\infty$ is also homotopy final.
	 As a result, the chain complex $\hocolim\, FQ$ is equivalent to the realization $|F|$ of the simplicial object underlying $F$.
	 This implies that $|F|$ comes with an action of $G\rtimes \mathbb{S}^1$. 
	 Now let us apply this to the functor $H:\int D \to \Ch$ from~\eqref{eqnfunctorH}. Its realization $|H|$ is the Hochschild complex of $\cat{C}$, which now comes with an action of $G\rtimes \mathbb{S}^1= \Omega \left( \left(   \int_{\mathbb{S}^1} \mathbb{S}^1   \right)_{\horb \mathbb{S}^1}\right)$. It extends the cyclic action by construction.

		 Finally, we observe  that the symmetric monoidal bicategory of linear categories comes with a homotopy coherent action of $\Z_2$ that sends a category to its opposite category.
		A self-injective ribbon Grothendieck-Verdier category $\cat{C}$ 
		does not only come
		 with a duality functor $D: \cat{C}\to\cat{C}^\opp$, but also a pivotal structure, i.e.\ an isomorphism $D^2 \cong \id_\cat{C}$, see \cite[Corollary~8.3]{bd}. This turns $\Proj\cat{C}$ into a homotopy $\Z_2$-fixed point for the just mentioned $\Z_2$-action on linear categories (this crucially uses that $D$ preserves projective objects thanks to self-injectivity, see Remark~\ref{remselfinjective}).
		The homotopy $\Z_2$-fixed point structure is through balancing preserving functors thanks to $D\theta_X = \theta_{DX}$, see Definition~\ref{defbalancedbraided}.
		The Hochschild complex of $\Proj \cat{C}$ and its opposite category $(\Proj \cat{C})^\opp$ can be canonically identified; this is true for any linear category. For this reason, $\lint^{X\in\Proj\cat{C}}\cat{C}(X,X)$ inherits, in addition to the action of   $\Omega \left( \left(   \int_{\mathbb{S}^1} \mathbb{S}^1   \right)_{\horb \mathbb{S}^1}\right)$ that we have just established, a homotopy coherent action of $\Z_2$ from the $\Z_2$-fixed point structure (the duality $D$ acts by a chain map, and the pivotal structure gives a chain homotopy between the chain map for $D^2$ and the identity --- this follows from the functoriality of the Hochschild complex). 
		
		Since the $\Z_2$-action is by linear balancing preserving functors, the $\Z_2$-action intertwines the $\Omega \left( \left(   \int_{\mathbb{S}^1} \mathbb{S}^1   \right)_{\horb \mathbb{S}^1}\right)$-action, but in a twisted way as we explain now:
		The Hochschild complex of $\Proj\cat{C}$ and $(\Proj \cat{C})^\opp$, even though they can be identified as chain complexes, can only be identified \emph{as cyclic objects} if we precompose the cyclic object for $(\Proj \cat{C})^\opp$ with the reversal functor $r:\Lambda \to \Lambda$. As a consequence, the total action on the Hochschild complex is not an action of the product of $\Omega \left( \left(   \int_{\mathbb{S}^1} \mathbb{S}^1   \right)_{\horb \mathbb{S}^1}\right)$ with $\Z_2$, but the semidirect product with $\Z_2$, where $\Z_2$-acts by reflection on both $\mathbb{S}^1$'s in
		$\Omega \left( \left(   \int_{\mathbb{S}^1} \mathbb{S}^1   \right)_{\horb \mathbb{S}^1}\right)$ that are written as subscript and trivially on the $\mathbb{S}^1$ which is not written as subscript.
		In other words, the duality plays here the same role as the anti algebra involution
		in dihedral homology.
		In summary, the Hochschild complex of $\cat{C}$ comes with an action of the topological group
		$      \Omega \left( \left(   \int_{\mathbb{S}^1} \mathbb{S}^1   \right)_{\horb \mathbb{S}^1}\right) \rtimes \Z_2 = \Omega \left( \left(   \int_{\mathbb{S}^1} \mathbb{S}^1   \right)_{\horb \mathbb{S}^1\rtimes \Z_2}\right) $. This topological group is given by the based loops of the space
	$\left(   \int_{\mathbb{S}^1} \mathbb{S}^1   \right)_{\horb \mathbb{S}^1\rtimes \Z_2}$, i.e.\ of the dihedral homology $\DH(\mathbb{S}^1)$ of $\mathbb{S}^1$. 
		By Theorem~\ref{thmbdiff} $\DH(\mathbb{S}^1) \simeq B\Diff(H_{1,0})$. As a result, the Hochschild complex of $\cat{C}$ comes with an action of
		$ \Omega \left( \left(   \int_{\mathbb{S}^1} \mathbb{S}^1   \right)_{\horb \mathbb{S}^1}\right) \rtimes \Z_2=\Omega \DH(\mathbb{S}^1)\simeq \Diff(H_{1,0})$.
		\end{proof}

	\begin{remark}\label{remframedE2}
		By one of the main results of \cite{cyclic}, ribbon Grothendieck-Verdier categories in $\Lexf$ are equivalent to \emph{cyclic} framed $E_2$-algebras in $\Lexf$. In connection with Giansiracusa's result \cite{giansiracusa} on the relation between the modular envelope of the cyclic framed $E_2$-operad and the handlebody operad, this is used in \cite{cyclic} to build systems of handlebody group representations. These results, however, are not available for the solid closed torus and make no statements about \emph{diffeomorphism} groups. For this reason, the theory of cyclic framed $E_2$-algebras in symmetric monoidal bicategories from \cite{cyclic} does not provide a shortcut to the proof of Theorem~\ref{mainthm}. In fact, the insight rather flows in the opposite direction: The  dihedral homology computation in Theorem~\ref{thmbdiff} will enable us to improve results on the modular envelope of the framed $E_2$ operad and help us characterize systems of handlebody group representations
		in~\cite{mwansular}. 
		\end{remark}

\begin{example}\label{exhopf}
	Any finite ribbon category $\cat{C}$ in the sense of \cite{egno} is in particular a self-injective ribbon Grothendieck-Verdier category. In this case, the Grothendieck-Verdier duality comes actually from rigidity.
	A source of finite ribbon categories are finite-dimensional ribbon Hopf algebras:
	If $A$ is a finite-dimensional ribbon Hopf algebra, then the category of finite-dimensional $A$-modules is a finite ribbon category \cite[XIV.6]{kassel}.
	The Hochschild complex $\lint^{X\in\Proj \cat{C}}\cat{C}(X,X)$ is equivalent to the ordinary Hochschild complex of $A$.
	As a consequence, the Hochschild complex of $A$ comes with an action of $\Diff(H_{1,0})$ extending the cyclic action.
	Let us also describe the underlying action of $\Map(H_{1,0})\cong \Z\times\Z_2$ in more detail.
	To this end, we use the Lyubashenko coend $\mathbb{F}:= \int^{X \in\cat{C}} X^\vee \otimes X \in \cat{C}$ of $\cat{C}$ \cite{lyu}.
	By
	 \cite[Theorem~3.9]{dva}, we have $\lint^{X\in\Proj\cat{C}}\cat{C}(X,X)\simeq \cat{C}(I,\mathbb{F}_\bullet) $,  where $\mathbb{F}_\bullet$ is a projective resolution of $\mathbb{F}$. 
	By recapitulating the construction given in the proof of Theorem~\ref{mainthm} we conclude that the action of $\Map(H_{1,0})\cong \Z\times\Z_2$ 
	on the Hochschild complex $\lint^{X\in\Proj\cat{C}}\cat{C}(X,X)$ of the finite ribbon category $\cat{C}$ is given
	as follows:
	\begin{pnum}
		\item The $\Z$-factor acts by postcomposition with the balancing. Equivalently, it can be described as the chain map $\cat{C}(I,\mathbb{F}_\bullet) \to \cat{C}(I,\mathbb{F}_\bullet) $ induced by the automorphism of $\mathbb{F}=\int^{X\in\cat{C}} X^\vee \otimes X$ that applies the balancing to the first or second tensor factor under the coend (both give the same result). \label{itembalancing}
		
		\item The $\Z_2$-factor acts on $\lint^{X\in\Proj\cat{C}}\cat{C}(X,X)$ by applying the duality functor to morphism spaces
		and identifying the Hochschild complexes of $\cat{C}$ and $\cat{C}^\opp$.
		This operation is (homotopy) involutive because of the pivotal structure.
		(Under the equivalence $\lint^{X\in\Proj\cat{C}}\cat{C}(X,X)\simeq \cat{C}(I,\mathbb{F}_\bullet)$, we may equivalently say
		that the generator of $\Z_2$ acts by the map induced by the inverse antipode of the Hopf algebra $\mathbb{F}$ in $\cat{C}$ \cite{lyu}. 
		This follows from the definition of the antipode in \cite[Section~2.5]{lyu} and \cite[Lemma~6.1]{mwansular}.)\label{itembalancing2}
		\end{pnum}
		In the special case where $\cat{C}$
	is given as category of modules over a ribbon Hopf algebra $A$, the coend
	$\mathbb{F}$ is given by
	the dual $A^*_\text{coadj}$ of $A$ with its coadjoint $A$-action $A \otimes A^* \to A^*$ sending  $ a \otimes \alpha$ to the linear form $b \mapsto \alpha\left(      S(a'ba'')    \right) $ \cite[Theorem~7.4.13]{kl}. 
	Here $\Delta a = a'\otimes a''$ is the Sweedler notation for the coproduct $\Delta :A\to A\otimes A$, and $S:A\to A$ is the antipode. 	The equivalence
	$\lint^{X\in\Proj\cat{C}}\cat{C}(X,X)\simeq \cat{C}(I,\mathbb{F}_\bullet) $
	is well-known in the  Hopf algebraic special case,  see e.g.~\cite[Section~2.2]{bichon}. In that case, one obtains $CH_*(A)\simeq \Hom_A(k,  {  A^*_\text{coadj}  } _\bullet  )$ for the Hochschild complex of $A$. The $\Z$-factor acts via multiplication with the ribbon element
	while the $\Z_2$-factor acts through the inverse antipode of the Hopf algebra $A_\text{coadj}^*$ in $A\text{-mod}$. 
\end{example}

	\begin{remark}[Visibility of higher homotopy groups of diffeomorphism groups on differential graded conformal blocks]\label{dmf}
Let $\cat{C}$ be a modular category, i.e.\ a finite ribbon category with non-degenerate braiding.
Then by the main result of \cite{dmf}, building on preparations in \cite{svea,dva,svea2}, the category $\cat{C}$ gives rise to a symmetric monoidal functor $\mathfrak{F}_\cat{C}:\cat{C}\text{-}\Surfc \to \Ch$ from a category of $\cat{C}$-labeled surfaces (morphisms are sewing operations and mapping classes of surfaces up to a specific central extension coming from the framing anomaly) to chain complexes. The functor satisfies an excision property phrased in terms of homotopy coends.
The chain complex assigned to the torus is the Hochschild complex of $\cat{C}$. Hence, the Hochschild complex of a modular category comes with an action of $\text{SL}(2,\Z)$. At chain level, this is given in \cite{dva} generalizing the explicit description on the Hochschild cohomology 
of a modular category coming from a ribbon factorizable Hopf algebra 
  in \cite{svea}.
Under the equivalence
$\lint^{X\in\Proj\cat{C}}\cat{C}(X,X)\simeq \cat{C}(I,\mathbb{F}_\bullet)$,
the $\text{SL}(2,\Z)$-action is determined by the automorphisms associated to $T = \begin{pmatrix} 1 & 0 \\ 1 & 1 \end{pmatrix}$ 
and $S = \begin{pmatrix} 0 & -1 \\ 1 & \phantom{-} 0 \end{pmatrix}$. 
For $T$, this is exactly the automorphism 
described in  point~\ref{itembalancing} of Example~\ref{exhopf}.
The image of $S$ is an automorphism of $ \cat{C}(I,\mathbb{F}_\bullet)$ induced by an automorphism $\mathbb{F}\to\mathbb{F}$ which by definition in \cite{svea,dva} is the rather complicated `$S$-transformation' $\mathbb{F}\to\mathbb{F}$ \cite[Section~6]{lyu}. 
We can restrict the $\text{SL}(2,\Z)$-action along the inclusion $\Z\times\Z_2\subset \text{SL}(2,\Z)$ given by~\eqref{eqnTandRmatrix}; it sends the generator of $\Z$ to $T$ and the generator of $\Z_2$ to $S^2$.
Because the $S$-transformation $\mathbb{F}\to\mathbb{F}$ squares to the inverse of the antipode of the Hopf algebra $\mathbb{F}$ \cite[Section~6, eq.~(5)]{lyu}, it follows from point~\ref{itembalancing2} in Example~\ref{exhopf} that this $\Z\times \Z_2$-action on the Hochschild complex
agrees with the one obtained via the
 section $\Map(H_{1,0})\to\Diff(H_{1,0})$ and Theorem~\ref{mainthm}.  This has the following immediate consequence:
The `handlebody part' of the mapping class group action on the Hochschild complex of a modular category from \cite{svea,dva} admits a non-trivial extension to an action of $\Diff(H_{1,0})$. The non-triviality of the extension follows for example from the fact that one of the generators of $\pi_1(\Diff(H_{1,0}))$ acts by the cyclic symmetry and hence is non-trivial.
This is, to the best of our knowledge, the first instance of a non-trivial action of higher homotopy groups of diffeomorphism groups on conformal blocks.
\end{remark}

\spaceplease
\begin{corollary}\label{cordep}
	For self-injective
	 ribbon Grothendieck-Verdier categories,
	the Hochschild complex equipped with its $\Diff(H_{1,0})$-action is a stronger invariant than the Hoch\-schild complex with its cyclic action. More precisely, there exist ribbon Grothendieck-Verdier categories with cyclically equivalent Hoch\-schild complexes which are however not equivalent as $\Diff(H_{1,0})$-modules. 
	\end{corollary}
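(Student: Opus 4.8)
The plan is to prove the sharper existence statement by exhibiting two self-injective ribbon Grothendieck-Verdier categories that share the same underlying linear category --- so that their Hochschild complexes agree together with the cyclic $\mathbb{S}^1$-action --- but whose balancings differ in a way detected by the Dehn twist $T$ generating the $\Z$-factor of $\Map(H_{1,0})\cong\Z\times\Z_2$. The conceptual input is twofold. First, the cyclic object computing the Hochschild complex, and hence the Hochschild complex with its $\mathbb{S}^1$-action, depends only on the $k$-linear category underlying $\cat{C}$ (its morphism spaces and their composition), not on the monoidal, braided or ribbon data; so any two ribbon Grothendieck-Verdier categories with the same underlying linear category are automatically cyclically equivalent. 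Second, by the analysis in the proof of Theorem~\ref{mainthm}, made explicit in Example~\ref{exhopf}, the Dehn twist $T\in\Map(H_{1,0})\subset\Diff(H_{1,0})$ acts on $\lint^{X\in\Proj\cat{C}}\cat{C}(X,X)$ by postcomposition with the balancing $\theta$. Thus a difference between the balancing-induced automorphisms will obstruct an equivalence of $\Diff(H_{1,0})$-modules while leaving the cyclic equivalence intact.

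For the explicit example I would take $\cat{C}_1=\Vect_k^{\Z/2}$, the category of $\Z/2$-graded vector spaces with trivial braiding and trivial balancing, and $\cat{C}_2=\catf{sVect}_k$, the \emph{same} category of $\Z/2$-graded vector spaces but equipped with the sign braiding and the fermionic balancing, i.e.\ $\theta=+\id$ on the even line and $\theta=-\id$ on the odd line. Both are semisimple finite ribbon categories, hence self-injective ribbon Grothendieck-Verdier categories, and they have literally the same underlying linear category $\Vect_k\times\Vect_k$; consequently their Hochschild complexes coincide as cyclic objects and are in particular cyclically equivalent. Since both categories are semisimple, the Hochschild complex is concentrated in degree zero with $HH_0\cong k\oplus k$, one summand for each simple object, and the higher homology vanishes. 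On $HH_0$ the Dehn twist acts diagonally through the balancing: by $\operatorname{diag}(1,1)$ for $\cat{C}_1$ and by $\operatorname{diag}(1,-1)$ for $\cat{C}_2$.

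It then remains to argue that these two $\Diff(H_{1,0})$-modules are genuinely inequivalent. Using the section $\Map(H_{1,0})\to\Diff(H_{1,0})$ recorded in~\eqref{eqnTandRmatrix}, the cyclic subgroup $\Z=\langle T\rangle$ embeds into $\Diff(H_{1,0})$, so any equivalence of $\Diff(H_{1,0})$-modules restricts to a $\Z$-equivariant homotopy equivalence and therefore induces, on zeroth homology, a linear isomorphism $k\oplus k\to k\oplus k$ intertwining the two $T$-actions. Such an isomorphism would conjugate $\operatorname{diag}(1,1)$ into $\operatorname{diag}(1,-1)$, which is impossible since these automorphisms have different eigenvalue multisets. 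Hence $\cat{C}_1$ and $\cat{C}_2$ have cyclically equivalent Hochschild complexes that are not equivalent as $\Diff(H_{1,0})$-modules, proving the corollary. The only point requiring genuine care --- and the step I expect to be the main obstacle to write out rigorously --- is the claim that the eigenvalue multiset of $T$ on $HH_0$ is a bona fide invariant of the homotopy-coherent $\Diff(H_{1,0})$-module structure: this rests on the fact that restriction along the honest subgroup $\langle T\rangle$ produces a single self-equivalence, well-defined up to homotopy, whose induced action on homology is well-defined up to conjugacy, so that its spectrum is preserved by any module equivalence.
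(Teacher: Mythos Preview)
Your argument is correct and follows the same strategy as the paper: place two ribbon structures on one and the same underlying linear category, so that the Hochschild complexes coincide as cyclic objects, and then separate them by the action of the Dehn twist $T$, which by Example~\ref{exhopf} acts through the balancing. The paper carries this out with $D(G)$-modules versus $(G\DS G)$-modules equipped with the symmetric structure, and appeals to Remark~\ref{dmf} together with results of \cite{dmf} to see that the $T$-action is non-trivial on the Drinfeld double side; your choice of $\Z/2$-graded vector spaces versus super vector spaces is more elementary and lets you compute the $T$-action on $HH_0\cong k\oplus k$ by hand. Two minor points: your example needs $\operatorname{char}k\neq 2$ so that $-1\neq 1$, which you should state; and your closing worry is harmless, since a homotopy coherent $\Diff(H_{1,0})$-action induces an honest $\pi_0\Diff(H_{1,0})=\Map(H_{1,0})$-action on homology, and any equivalence of modules restricts to an isomorphism of these $\Map(H_{1,0})$-representations, so the conjugacy class (hence the spectrum) of $T$ on $HH_0$ is indeed an invariant.
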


\begin{proof}
	Let $G$ be a finite group and $\cat{C}$ the category of finite-dimensional $k$-linear modules over the Drinfeld double $D(G)$. This is a modular category whose differential graded modular functor is described in~\cite[Example~3.13]{dmf}. From the description given there and Remark~\ref{dmf}, we may deduce that the $\Z$-factor of the handlebody group $\Map(H_{1,0})\cong \Z\times\Z_2$ acts non-trivially
	(for instance, its action on $HH_0(  \cat{C}  )$ is non-trivial). 
	But now observe that $\cat{C}$, as a linear category, is equivalent to modules over the action groupoid $G\DS G$ of the adjoint action of $G$ on itself (this is because $D(G)$-modules are equivalent to Yetter-Drinfeld modules over $k[G]$, see~\cite[Theorem~IX.5.2]{kassel}). The category of finite-dimensional $G\DS G$-modules can be endowed with a symmetric monoidal product (the level-wise monoidal product
	that one can define on any category of functors to a symmetric monoidal category;
	 this is \emph{not} the monoidal product coming from the Hopf algebra structure of the Drinfeld double) and hence with a trivial balancing (that is also a ribbon structure). We denote the resulting finite ribbon category as $\cat{C}_0$.
	Then $\cat{C}=\cat{C}_0$ holds linearly which means that their Hochschild complexes agree (and, of course, so does the $\mathbb{S}^1$-action on them). But in contrast to the Hochschild complex of $\cat{C}$, the action of $\Z\subset \Map(H_{1,0})$ is trivial because the balancing is the identity.
	\end{proof}

\begin{remark}[Ansular homology]\label{remansular}
	The cyclic action on the Hochschild chain complex naturally leads to \emph{cyclic homology} by passing to homotopy orbits of the $\mathbb{S}^1$-action.
	Since on the Hochschild complex of a self-injective ribbon Grothendieck-Verdier category in $\Lexf$, the cyclic action extends to a $\Diff(H_{1,0})$-action, it is natural to consider the homotopy orbits of this action that one might call \emph{ansular homology} (\emph{ansa} is Latin for \emph{handle}). This yields an appropriate replacement of cyclic homology sensitive to the ribbon structure. The computation of these homotopy orbits lies beyond the scope of this article.
	\end{remark}

\small

\end{document}